\documentclass[11pt]{article}

\usepackage[a4paper,margin=1in]{geometry}
\usepackage{amsmath,amssymb,amsthm,mathtools}
\usepackage{microtype}
\usepackage{xcolor}
\usepackage{tikz}
\usepackage{float}
\usetikzlibrary{arrows.meta,decorations.pathreplacing}
\usepackage{hyperref}
\usepackage[nameinlink,capitalise]{cleveref}

\hypersetup{colorlinks=true,linkcolor=blue!50!black,citecolor=blue!50!black,urlcolor=blue!50!black}

\newtheorem{theorem}{Theorem}[section]
\newtheorem{proposition}[theorem]{Proposition}

\crefname{theorem}{Theorem}{Theorems}
\crefname{proposition}{Proposition}{Propositions}
\crefname{lemma}{Lemma}{Lemmas}
\crefname{remark}{Remark}{Remarks}

\newcommand{\E}{\mathsf E}
\newcommand{\Prob}{\mathsf P}
\newcommand{\1}{\mathbf 1}
\newcommand{\law}{\mathcal L}
\newcommand{\dd}{\mathrm d}

\title{On a moment determinacy conjecture of Bertoin and Yor}

\author{
Martin Minchev\thanks{Institute of Mathematics, University of Zurich, Switzerland. Faculty of Mathematics and Informatics, Sofia University, Bulgaria. Email: \texttt{martin.minchev@math.uzh.ch}.}
}

\date{}

\begin{document}
\maketitle

\begin{abstract}
Let $\xi$ be an unkilled real-valued L\'evy process which drifts to $+\infty$ and has positive exponential moments of all orders, and define
\[
 I_\xi=\int_0^\infty e^{-\xi_t}\,\dd t,
 \qquad
 \text{and its reciprocal }
 X_\xi=1/I_\xi.
\]
Bertoin and Yor \cite{Bertoin-Yor-2002} proved that $X_\xi$ is moment-determinate when $\xi$ has no positive jumps, and conjectured that this condition is also necessary. We prove the latter.

The proof is based on a lower bound near zero for the law of $I_\xi$. We show that a group of sufficiently many positive jumps near the origin puts $I_\xi$ on a suitable small scale. The first selected jump time is used as a one-dimensional smooth coordinate, yielding an absolutely continuous subcomponent of the law of $I_\xi$. After the change of variables, the resulting subdensity of $X_\xi$ satisfies a Krein moment indeterminacy criterion.
\end{abstract}

\noindent\textbf{Keywords.} Exponential functionals; L\'evy processes; moment problem; Krein criterion.\\
\textbf{Mathematics Subject Classification.} 60G51; 44A60.

\section{Introduction}

Let $\xi$ be an unkilled L\'evy process with Laplace exponent
\[
 \psi(q)=\log \E(e^{q\xi_1}),
 \qquad
 \text{for }q\geq0,
\]
and suppose throughout that
\begin{equation}\label{eq:standing}
 \psi(q)<\infty\quad\text{for all }q\geq0,
 \qquad
 0<\psi'(0+)=\E(\xi_1)<\infty.
\end{equation}
Then $\xi$ drifts to $+\infty$, and the exponential functional
\[
 I_\xi=\int_0^\infty e^{-\xi_t}\,\dd t\qquad
 \text{is finite a.s. }
\]
Bertoin and Yor \cite{Bertoin-Yor-2002} proved that its reciprocal has all moments given by
\begin{equation}\label{eq:BY}
 \E(I_\xi^{-k})=
 \psi'(0+)\frac{\psi(1)\psi(2)\cdots\psi(k-1)}{(k-1)!},
 \qquad
 \text{for }k\geq1,
\end{equation}
and showed that, when $\xi$ has no positive jumps, $1/I_\xi$ has an exponential moment in a neighbourhood of the origin and is therefore moment-determinate. They conjectured that the absence of positive jumps is also necessary; see also \cite{Bertoin-Yor-2005,Minchev-Savov-2026}.

The aim of this note is to prove this necessity. If positive jumps are present, then $\psi(q)$ grows at least exponentially along large $q$, so classical Carleman-type criteria fail; for background see, for example, \cite{Lin-2017,Stoyanov-Lin-Kopanov-2020}. However, this does not imply moment indeterminacy on its own, and one needs additional information on the density of $I_\xi$ near zero.
    One possible approach, as described in \cite[Remark 3.7]{Minchev-Savov-2026}, would be to use Mellin inversion and saddle-point analysis of the Bernstein--gamma functions introduced by Patie and Savov~\cite{Patie-Savov-2018}, in the spirit of the density asymptotics obtained in~\cite{Minchev-Savov-2023}. Here we obtain the required lower bound probabilistically.
   In the pure Poisson case, Bertoin and Yor \cite[Section~4]{Bertoin-Yor-2002} obtained explicit small-value density information for $I_\xi$. After inversion, this gives a lognormal-type tail for $1/I_\xi$, in the classical scale of moment indeterminacy. Here we prove a lower bound of the same logarithmic order.

Consequently, a subdensity version of the Krein criterion, Proposition \ref{prop:subkrein}, provides the main result. 

\begin{theorem}\label{thm:main}
Assume \eqref{eq:standing}. Then
\[
 1/I_\xi\text{ is moment-determinate}
 \quad\iff\quad
 \Pi((0,\infty))=0,
\]
where $\Pi$ denotes the Lévy measure of $\xi$.
\end{theorem}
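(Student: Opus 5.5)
The direction ``$\Leftarrow$'' is the Bertoin--Yor result: with no positive jumps, $1/I_\xi$ has exponential moments near the origin and is therefore moment-determinate. So the whole task is ``$\Rightarrow$''. Assuming $\Pi((0,\infty))>0$, we must show that $X_\xi=1/I_\xi$ is moment-indeterminate. The plan is to apply the subdensity Krein criterion, Proposition~\ref{prop:subkrein}. It suffices to produce a measurable $g\ge 0$ with $g(x)\,\dd x\le \Prob(X_\xi\in\dd x)$ on some half-line $[x_0,\infty)$ and
\[
\int_{x_0}^\infty \frac{-\log g(x)}{\sqrt{x}\,(1+x)}\,\dd x<\infty .
\]
This holds as soon as $-\log g(x)\le C(\log x)^2$ for large $x$, which is the lognormal scale mentioned in the introduction. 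Equivalently, we need a lower bound for a subdensity $h$ of $I_\xi$ near $0$ of the form $h(\varepsilon)\ge \exp(-C(\log(1/\varepsilon))^2)$, and then change variables via $x=1/\varepsilon$, $g(x)=x^{-2}h(1/x)$.

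To build $h$, we first fix $\delta>0$ with $\Pi([\delta,\infty))=\lambda>0$ and decompose $\xi=\xi^{(1)}+\xi^{(2)}$. Here $\xi^{(2)}$ is the compound Poisson process of jumps of size $\ge\delta$, with rate $\lambda$, and $\xi^{(1)}$ is the independent rest. For a small scale $\varepsilon$, let $n=n(\varepsilon)\approx \log(1/\varepsilon)/\delta$. We consider the event $A_\varepsilon$ on which:
\begin{itemize}
\item[(i)] $\xi^{(2)}$ has at least $n$ jumps in the time window $[t_*,2t_*]$, where $t_*=\varepsilon$ up to constants;
\item[(ii)] $\sup_{s\le 2t_*}|\xi^{(1)}_s|\le 1$;
\item[(iii)] after the $n$-th jump, the shifted process $\xi_{T_n+\cdot}-\xi_{T_n}$ has exponential functional bounded by a constant $K$, which has positive probability $p$ independent of $\varepsilon$.
\end{itemize}
On $A_\varepsilon$ we write $I_\xi=T_1+R$, with $R\in[0,C\varepsilon]$. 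Indeed, $\int_{T_1}^{T_n}$ is at most $T_n-T_1\lesssim\varepsilon$, and the tail is at most $e^{-n\delta+1}K\lesssim\varepsilon$. Hence $I_\xi\asymp\varepsilon$. The probability of $n$ Poisson points in a window of length $\varepsilon$ is about $(\lambda\varepsilon)^n/n!=\exp(-O((\log 1/\varepsilon)^2))$. This is exactly the lognormal order.

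The main obstacle is converting this probability lower bound into a genuine \emph{density} lower bound, i.e.\ an absolutely continuous subcomponent. The idea from the abstract is to use the first selected jump time $T_1$ as a smooth coordinate. Condition on everything except the position of $T_1$ within a small interval, namely the gaps $T_{j+1}-T_j$, the jump sizes, and $\xi^{(1)}$. Then $I_\xi=\int_0^{T_1}e^{-\xi^{(1)}_s}\dd s+F$, where $F$ is a functional of the post-$T_1$ path, which is independent of $T_1$ by the strong Markov property. Conditionally, $T_1$ has the density $\lambda e^{-\lambda t}$. The map $t\mapsto \int_0^t e^{-\xi^{(1)}_s}\dd s$ is strictly increasing with derivative in $[e^{-1},e]$ on $A_\varepsilon$. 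The pushforward therefore has density at least $c\,\lambda e^{-\lambda t}$ on an interval of length $\gtrsim\varepsilon$. We then integrate over the conditioning and the remaining $n-1$ jump times, which contribute the factor $(\lambda\varepsilon)^{n-1}/(n-1)!$, times $p$. Some care is needed because the requirement (ii) on $\xi^{(1)}$ must be imposed on an event independent of $T_1$, for example on the whole window $[0,2t_*]$. We also have to make sure the Markov split at $T_1$ is legitimate. Handling this, we expect $h(y)\ge c\,p\,(\lambda\varepsilon)^{n-1}/(n-1)!$ for $y\in[\varepsilon,2\varepsilon]$. Taking logarithms with $n\asymp\log(1/\varepsilon)$ gives $-\log h(y)\le C(\log(1/y))^2$. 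Covering all small $y$ by dyadic $\varepsilon$, the Krein integral for $g$ converges. Proposition~\ref{prop:subkrein} then yields indeterminacy, using that all moments are finite by \eqref{eq:BY}.
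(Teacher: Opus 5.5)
The architecture matches the paper's: split off the jumps of size at least $\delta$, force a group of about $\log(1/\varepsilon)$ of them near time $0$, use the first jump time as a smooth coordinate, get a subdensity of order $\exp(-C(\log(1/y))^2)$, and conclude with Proposition~\ref{prop:subkrein}. The Poisson count and the Krein step are fine. The gap is in the density step. The decomposition $I_\xi=\int_0^{T_1}e^{-\xi^{(1)}_s}\,\dd s+F$ with $F$ independent of $T_1$ is false under either reading of your conditioning. So the derivative bound for $t\mapsto\int_0^t e^{-\xi^{(1)}_s}\,\dd s$ tells you nothing about the law of $I_\xi$.

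\begin{itemize}
\item If you freeze the path of $\xi^{(1)}$ and the gaps, moving $T_1$ to $t$ slides every big jump along the frozen path. Then $F$ depends on $t$ through the position of each shifted jump, including those after $T_n$. Your event (iii) also depends on $t$, since it concerns the path after $T_n=t+\text{gaps}$.
\item If you instead apply the strong Markov property at $T_1$ and freeze the shifted post-$T_1$ process, then $F=e^{-\xi^{(1)}_t-\Delta_1}\widetilde I$ still carries the factor $e^{-\xi^{(1)}_t}$. This factor jumps whenever $\xi^{(1)}$ does, and is as rough as Brownian motion if there is a Gaussian part. Then $t\mapsto I(t)$ is neither monotone nor continuous, and you control neither its image nor the pushforward of $\lambda e^{-\lambda t}\,\dd t$. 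Splitting at $T_n$ has the same defect.
\end{itemize}
So ``making sure the Markov split at $T_1$ is legitimate'' is not a technicality: it is exactly the step that fails.

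The missing idea is to split at a time that does not move with $t$. The paper cuts at the deterministic time $R\ell_m$ and writes $I=\int_0^{R\ell_m}e^{-\xi_s}\,\dd s+e^{-\xi_{R\ell_m}}I'$, with $I'$ an independent copy of $I$. It requires exactly $m$ big jumps before $R\ell_m$: the first at $t$, the rest in $(t,t+\delta\ell_m]$. It freezes $\eta$ on $[0,R\ell_m]$ in absolute time, together with the jump sizes, their relative positions and $I'$. Delaying the group by $u$ then raises the integrand pointwise, only on $m$ intervals of length $u$, while $e^{-\xi_{R\ell_m}}I'$ is unchanged. Hence $0\le F_m(t+u)-F_m(t)\le e^Kmu$, and the factor $m$ is harmless at lognormal scale.

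You also need a uniform crossing property that your sketch does not supply. To get a pointwise lower bound on a \emph{fixed} interval such as $[\varepsilon,2\varepsilon]$, the image of the $T_1$-range must contain that interval for every admissible frozen configuration. With your window $[t_*,2t_*]$ and only $|\xi^{(1)}|\le 1$, the image can shift by as much as its own length, so this is not automatic. The paper lets $t$ range over $[r_0\ell_m,r_1\ell_m]$ with the group of length $\delta\ell_m$, where $e^K(r_0+\delta)<1/4$, $e^{-K}r_1>e^a$ and $8e^KM\le L$. This forces $F_m(r_0\ell_m)<\ell_m$ and $F_m(r_1\ell_m)>\ell_{m-1}$. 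Monotonicity plus the Lipschitz bound then give $\mathrm{Leb}\{t:F_m(t)\in A\}\ge e^{-K}m^{-1}\mathrm{Leb}(A)$ for all Borel $A\subset\mathcal I_m$, uniformly in the frozen data.
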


Only the forward direction has not been proved. The argument can be summarised as follows: if $m$ positive jumps of size at least $a$ occur near time zero, then the future part of the exponential functional is multiplied by roughly $e^{-am}$. Thus a natural scale for small values of $I_\xi$ is \[
 \ell_m=L e^{-am},
 \qquad \text{for some $L>0$}.
\]
The number $m$ of jumps makes the future contribution small. The first selected jump time $t$ is then varied in an interval of order $\ell_m$. Before this first selected jump, the selected compound Poisson part has not moved, so the contribution to $I_\xi$ is of order $t$ and \textit{a fortiori} $\ell_m$ as well. By choosing  suitably endpoints of the interval in which $t$ changes, the monotone map $t\mapsto I(t)$ crosses the whole
\[
\mathcal I_m=(\ell_m,\ell_{m-1}].
\]
Since the first selected jump time has a density and this map has a uniform one-sided Lipschitz bound on the controlled event, this gives an absolutely continuous subcomponent of the law of $I_\xi$ on each $\mathcal I_m$. Thus we obtain a density lower bound.

The mechanism is sketched in the figure below. The constants
$r_0$, $r_1$, $\delta$ and $R$ are fixed later in the proof; here the figure
only indicates their roles.
\begin{figure}[H]
\centering

\begin{minipage}[t]{0.47\textwidth}
\vspace{0pt}
\raggedright\textbf{(a) selected jumps}

\vspace{1mm}
\centering
\begin{tikzpicture}[x=0.82cm,y=0.85cm,>=Latex,font=\scriptsize]
\path[use as bounding box] (-0.15,0.20) rectangle (7.25,2.05);

% time axis
\draw[->] (0,1.10) -- (7.05,1.10) node[right] {time};

% ticks
\draw (0,1.00) -- (0,1.20) node[below=4pt] {$0$};
\draw (1.25,1.00) -- (1.25,1.20) node[below=4pt] {$r_0\ell_m$};
\draw (2.65,1.00) -- (2.65,1.20) node[below=4pt] {$t$};
\draw (4.65,1.00) -- (4.65,1.20) node[below=4pt] {$r_1\ell_m$};
\draw (6.10,1.00) -- (6.10,1.20) node[below=4pt] {$R\ell_m$};

% selected jump crosses on the time axis
\draw[red!75!black,thick]
  (2.65,1.10) +(-0.11,-0.11) -- +(0.11,0.11)
  (2.65,1.10) +(-0.11,0.11) -- +(0.11,-0.11);

\foreach \x in {2.9,3.1,3.35,3.75,4.0}{
  \draw[red!75!black,thick]
    (\x,1.10) +(-0.08,-0.08) -- +(0.08,0.08)
    (\x,1.10) +(-0.08,0.08) -- +(0.08,-0.08);
}

% short interval containing the group after the first jump
\draw[blue!70!black,very thick] (2.65,0.52) -- (4.3,0.52);
\draw[blue!70!black] (2.65,0.44) -- (2.65,0.60);
\draw[blue!70!black] (4.3,0.44) -- (4.3,0.60);
\node[blue!70!black] at (3.475,0.26) {$\delta\ell_m$};

\draw[blue!70!black,very thick] (4.65,1.1) -- (6.1,1.1);
\draw[blue!70!black] (4.65,0.44) -- (4.65,0.60);
\draw[blue!70!black] (6.1,0.44) -- (6.1,0.60);
% \node[blue!70!black] at (3.10,0.26) {$\delta\ell_m$};

% % short interval2 containing the group after the first jump
% \draw[blue!70!black,very thick] (5.25,0.) -- (3.55,0.52);
% \draw[blue!70!black] (2.65,0.44) -- (2.65,0.60);
% \draw[blue!70!black] (5.55,0.44) -- (3.55,0.60);
% \node[blue!70!black] at (3.10,0.26) {$\delta\ell_m$};

% deterministic cut
\draw[red!70!black,dashed,thick] (6.10,0.48) -- (6.10,1.82);
\node[red!70!black,above] at (6.10,1.82) {cut};
\end{tikzpicture}
\end{minipage}
\hfill
\begin{minipage}[t]{0.47\textwidth}
\vspace{0pt}
\raggedright\textbf{(b) density mechanism}

\vspace{1mm}
\centering
\begin{tikzpicture}[x=0.82cm,y=0.85cm,>=Latex,font=\scriptsize]
\path[use as bounding box] (-0.75,0.20) rectangle (7.05,2.05);

% axes
\draw[->] (0,0.45) -- (6.85,0.45) node[right] {$t$};
\draw[->] (0,0.45) -- (0,1.95) node[left] {$F_m$};

% target levels
\draw[dashed] (0,0.92) -- (6.15,0.92);
\draw[dashed] (0,1.62) -- (6.15,1.62);
\node[left] at (0,0.92) {$\ell_m$};
\node[left] at (0,1.62) {$\ell_{m-1}$};

% x-axis ticks
\draw (1.30,0.35) -- (1.30,0.55) node[below=4pt] {$r_0\ell_m$};
\draw (5.25,0.35) -- (5.25,0.55) node[below=4pt] {$r_1\ell_m$};

% increasing curve
\draw[very thick,blue!70!black]
  plot[smooth] coordinates {
    (1.30,0.72)
    (2.20,0.85)
    (3.20,1.06)
    (4.15,1.32)
    (5.25,1.64)
    (6.00,1.82)
  };
\end{tikzpicture}
\end{minipage}

\caption{The group of jumps construction. Left: the first selected jump time $T$ is allowed to take values $t\in[r_0\ell_m,r_1\ell_m]$, and the remaining selected jumps occur within a time interval of length $\delta\ell_m$ after it, before the deterministic cut time $R\ell_m$. Right: after the remaining randomness is frozen, the map $t\mapsto F_m(t)$ is increasing and crosses the scale interval $\mathcal I_m=(\ell_m,\ell_{m-1}]$.}
\label{fig:group}
\end{figure}

% For completeness, we note note that the law of $I_\xi$, that is $\mathcal L(I_\xi)$,  has a density in non-deterministic cases by \cite{Pardo-Rivero-VanSchaik-2013}, see also
% \cite[Section]{Minchev-Savov-2026}. However we need only a subdensity.

\subsection{Use of AI tools}

OpenAI's ChatGPT was used during the exploratory and editorial stages of this work, including GPT-5.4 Thinking and GPT-5.5 variants such as Thinking and Pro. In particular, the idea of trying a direct construction based on an early group of positive jumps, and then varying the time of the first selected jump as a smooth coordinate, came up in a \textit{discussion} with it. 

The use of a selected jump time can be seen in Bertoin, Lindner, and Maller~\cite[Section 3.3]{Bertoin-Lindner-Maller-2008}: they condition on almost all random quantities except certain jump times, so that the remaining conditional law is obtained as the image of Lebesgue measure under a deterministic map. The author takes full responsibility for the manuscript.

\section{A Krein criterion for subdensities}

    We first record the corollary of the usual Krein criterion which we use later. If $\mu$ is a probability measure, we call a finite positive measure $\nu$  a \textit{submeasure} of $\mu$ if $\nu(A)\leq\mu(A)$ for every measurable set $A$.

\begin{proposition}[Krein criterion for subdensities]\label{prop:subkrein}
Let $\mu$ be a probability measure on $(0,\infty)$ with moments of all positive orders. Suppose that there exists a non-zero absolutely continuous submeasure
\[
 \nu(\dd x)=h(x)\,\dd x\leq \mu(\dd x)
\]
such that $\nu$ has moments of all positive orders and
\begin{equation}\label{eq:krein}
 \int_c^\infty \frac{-\log h(x^2)}{1+x^2}\,\dd x<\infty,
 \qquad
 \text{ for some $c>0$,}
\end{equation}
where $h$ is positive on $(c^2,\infty)$. Then $\mu$ is moment-indeterminate.
\end{proposition}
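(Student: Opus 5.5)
Write $\mu=\nu+\rho$ with $\rho:=\mu-\nu\ge0$. Since $\nu\le\mu$, the measure $\rho$ is a finite positive measure, and it has moments of all positive orders because $\mu$ does. The idea is to perturb only the absolutely continuous part $\nu$ while keeping its moments fixed. Every moment of $\mu$ then stays the same, and positivity holds because $\rho\ge0$ and the perturbed $\nu$ stays nonnegative.

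\textbf{Step 1: normalise $\nu$ and apply the usual Krein criterion.} Set $Z=\nu((0,\infty))>0$ and $\tilde\nu=\nu/Z$. This is a probability measure on $(0,\infty)$ with density $\tilde h=h/Z$. The constant $\log Z$ integrates finitely against $(1+x^2)^{-1}$, so \eqref{eq:krein} still holds for $\tilde h$ on $(c,\infty)$.

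\textbf{Step 2: extend the integral down to $0$.} The classical Stieltjes-type Krein criterion asks for $\int_0^\infty \frac{-\log \tilde h(x^2)}{1+x^2}\,\dd x<\infty$. Equivalently, after substituting $y=x^2$, it asks for $\int_0^\infty \frac{-\log \tilde h(y)}{\sqrt y(1+y)}\,\dd y<\infty$. The contribution from $(c,\infty)$ is the only genuinely restrictive part, and the hypothesis controls it. However, $h$ may vanish on $(0,c^2]$, so we cannot use $\tilde\nu$ itself on that range.

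To handle $(0,c^2]$, modify the decomposition. Choose a small $\varepsilon>0$ and split $\nu=\nu_1+\nu_2$ with $\nu_1=h\1_{(c^2,\infty)}\,\dd x$. Then consider the probability measure $\kappa$ proportional to
\[
\nu_1(\dd x)+\varepsilon e^{-x}\1_{(0,c^2]}(x)\,\dd x.
\]
Its density is positive everywhere, and it satisfies the full Krein integral condition. On $(0,c^2]$ we have $-\log$ of the density equal to $x-\log\varepsilon+\text{const}$, which is bounded, and $\int_0^{c}\frac{\dd x}{1+x^2}<\infty$. All moments of $\kappa$ are finite. The Krein criterion therefore gives a probability measure $\kappa'\ne\kappa$ with the same moments as $\kappa$.

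Rather than replace all of $\kappa$ by $\kappa'$, we only need a scaled-down copy of the signed difference. The key fact is that there is a signed measure $\sigma$ with $|\sigma|\le\kappa$ and all moments zero. For example, take $\sigma=\tfrac12(\kappa-\kappa')$ if $\kappa'\ll\kappa$ with density in $[0,2]$. That is not automatic, so this step needs care.

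\textbf{Step 3 (main obstacle): construct a bounded perturbation.} Here I would use the standard proof of Krein's theorem rather than the black-box statement. Under the log-integrability condition, there is a nonzero bounded real function $g$ with $\int x^k g\,\kappa(\dd x)=0$ for all $k\ge0$. This comes from the outer function $G$ in the upper half-plane (or the Hardy-space argument) with $|G|=$ density on the real line, which produces $g=\operatorname{Im}$ or $\operatorname{Re}$ of a suitable bounded quotient. Indeed, the classical construction yields $\kappa_t=(1+t g)\kappa$, $|t|\le 1/\|g\|_\infty$, as a family of measures with identical moments. Since $\kappa\le C(\nu_1+\1_{(0,c^2]}\dd x)$, I instead arrange the construction directly on the measure $w\,\dd x$ with $w=h$ on $(c^2,\infty)$ and $w=\varepsilon e^{-x}$ on $(0,c^2]$. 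I would then multiply $g$ by the indicator of $(c^2,\infty)$... but that breaks the vanishing moments.

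So the cleaner route is to require $\nu$ to dominate $w$ after shrinking. That is, work only with $(c^2,\infty)$ and use the half-line/shifted version of Krein's criterion. Translating by $c^2$, the measure $h(x+c^2)\,\dd x$ on $(0,\infty)$ satisfies the hypothesis, possibly after reducing the near-zero part. Its density is positive on $(0,\infty)$ except that we need the integral near $0$ to be finite. Near $0$ I would shrink further: replace $h$ on $(c^2,c^2+1)$ by $\min(h,1)$, but positivity does not give log-integrability. I therefore expect this to be the delicate point. I would try to choose a larger $c'$ so that $\int_{c'}^{c'+1}|\log h|<\infty$, which is implied by \eqref{eq:krein} for a.e. choice by Fubini.

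With a bounded $g$ on $(c'^2,\infty)$ whose polynomial moments against $h$ vanish (polynomials in $x-c'^2$ span the same space), define $\mu_\pm=\mu\pm t g h\1_{(c'^2,\infty)}\dd x$ with $t\|g\|_\infty\le1$. Both are positive because $tgh\le h\le\mu$. They have the same moments as $\mu$, and they are distinct. Hence $\mu$ is moment-indeterminate.
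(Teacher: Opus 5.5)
Your final argument can be repaired, but it takes a much longer route than the paper, and the step you flag as delicate is stated incorrectly. The paper normalises $\nu$ and applies the tail form of Krein's criterion \cite[Theorem~4]{Lin-2017} directly to $\widetilde\nu$; that form only needs $h>0$ and the logarithmic integral on $(c,\infty)$. It then swaps: $\widetilde\mu=Z\,\widetilde\nu_\bullet+(\mu-\nu)$. The point you miss in Step~2 is that the replacement measure needs no domination at all. $\widetilde\mu\ge0$ simply because $\mu-\nu\ge0$, and only the measure being \emph{replaced} must be a submeasure of $\mu$. So the worry about $\kappa'\ll\kappa$ with density in $[0,2]$ is unnecessary, and so is the bounded perturbation $g$ of Step~3. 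Your $\kappa$ does fail, but for the reason you eventually spot: the added mass $\varepsilon e^{-x}\1_{(0,c^2]}$ need not be dominated by $\mu$.

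If you only want to use the classical full-integral criterion, translating to $(s,\infty)$ is the right idea, but the condition $\int_{c'}^{c'+1}|\log h|<\infty$ is the wrong one. It holds for every $c'$, so no Fubini is needed, and the relevant interval starts at $s=c'^2$ anyway. More importantly, it does not control the translated Krein integral near $0$, which is
\[
\int_0^1\frac{-\log h(s+x^2)}{1+x^2}\,\mathrm{d}x=\int_s^{s+1}\frac{-\log h(z)}{2\sqrt{z-s}\,(1+z-s)}\,\mathrm{d}z .
\]
This weight is singular at $z=s$; for example, $-\log h(z)=|z-s|^{-1/2}$ near $s$ is locally integrable but makes the integral diverge.

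What is true is the following. Since $\log^+h\le h$, the hypothesis gives $\log h\in L^1_{\mathrm{loc}}((c^2,\infty))$. By Tonelli, for $c^2<a<b$,
\[
\int_a^b\int_s^{s+1}|\log h(z)|(z-s)^{-1/2}\,\mathrm{d}z\,\mathrm{d}s\le2\int_a^{b+1}|\log h|<\infty,
\]
so almost every $s$ works. For $x\ge1$ the weights are comparable with those in the hypothesis. Then $h\1_{(s,\infty)}\,\mathrm{d}x\le\mu$ is moment-indeterminate: translate an alternative solution back, using that polynomials in $x-s$ span all polynomials. You then conclude with the same swap.

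This also spares you justifying a bounded $g$ on a half-line, which does not come directly from the Hamburger outer-function construction. You would have to symmetrise to $|x|h_s(x^2)$, use that the real part of the outer function is even, and exclude its vanishing. In short, the paper gains brevity by citing the tail version, while your route, once fixed, essentially rederives that version from the classical one.
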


\begin{proof}
Define the probability measure
$
 \widetilde\nu(\dd x)=\nu(\dd x)/\nu((0,\infty)).
$
Multiplying the density by a positive constant does not affect the finiteness of the logarithmic integral. Hence the half-line form of Krein's criterion \cite[Theorem~4]{Lin-2017} applies to $\widetilde\nu$. Thus there exists a probability measure $\widetilde\nu_\bullet\ne\widetilde\nu$ on $(0,\infty)$ such that
\[
 \int_0^\infty x^n\,\widetilde\nu(\dd x)
 =
 \int_0^\infty x^n\,\widetilde\nu_\bullet(\dd x),
 \qquad
 \text{for }n\geq0.
\]
Writing
$
 \mu=\nu+\mu_0,
$
with $\mu_0$ a positive finite measure, define
$ \widetilde\mu=\nu((0,\infty))\,\widetilde\nu_\bullet+\mu_0.
$
Then $\widetilde\mu$ is a probability measure whose moments coincide with those of $\mu$, while $\widetilde\mu\ne\mu$. Therefore $\mu$ is moment-indeterminate.
\end{proof}

\section{Density bound in the positive-jump case}
Assume in this subsection that
\[
 \Pi((0,\infty))>0,
 \quad \text{and choose $a>0$ such that }
 \lambda=\Pi([a,\infty))>0.
\]
Writing
$
 \Delta\xi_s=\xi_s-\xi_{s-}
$
for the jump of $\xi$ at time $s$, set
\[
 S_t=\sum_{s\leq t}\Delta\xi_s\,\1_{\{\Delta\xi_s\geq a\}},
 \qquad\text{and}\qquad
 \eta_t=\xi_t-S_t.
\]
Then $S$ is a compound Poisson process of rate $\lambda$, independent of the L\'evy process $\eta$.

\begin{proposition}\label{prop:group}
There exist constants $c_1, c_2>0$ and an absolutely continuous submeasure
\[
 \nu_{I_\xi}(\dd y)=h_{I_\xi}(y)\,\dd y\leq \Prob(I_\xi\in\dd y)
\]
such that
\begin{equation}\label{eq:hI-lower}
 h_{I_\xi}(y)\geq c_1\exp \left(-c_2(\log(1/y))^2\right),
 \qquad
 \text{for all sufficiently small }y>0.
\end{equation}
\end{proposition}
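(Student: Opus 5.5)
We follow the mechanism sketched in Figure~\ref{fig:group}. Fix $m\ge1$, set $\ell_m=Le^{-am}$, and write the jump times of the compound Poisson process $S$ as $T_1<T_2<\dots$, with jump sizes $J_1,J_2,\dots\ge a$. Everything is independent of $\eta$. Define the controlled event $A_m$ as follows.
\begin{enumerate}
\item $T_1=t\in[r_0\ell_m,r_1\ell_m]$.
\item $T_2,\dots,T_m\in(T_1,T_1+\delta\ell_m]$ and $T_{m+1}>R\ell_m$.
\item $J_1,\dots,J_m\le a+\varepsilon$, where $\Pi([a,a+\varepsilon])>0$ after possibly adjusting $a$.
\item $\sup_{s\le R\ell_{0}}|\eta_s|\le 1$.
\item The future functional $I'=\int_0^\infty e^{-(\xi_{R\ell_m+u}-\xi_{R\ell_m})}\,\dd u$ lies in a fixed window $[\kappa_0,\kappa_1]$ of positive probability.
\end{enumerate}
By the Markov property at the deterministic time $R\ell_m$, the last condition is independent of the others. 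Using the Poisson structure, the probability of the remaining conditions, jointly with the density of $T_1$, is at least
\[
e^{-\lambda R\ell_m}\,\frac{(\lambda\delta\ell_m)^{m-1}}{(m-1)!}\,p^m\,\Prob\Bigl(\sup_{s\le R\ell_0}|\eta_s|\le1\Bigr),
\]
with $p=\Pi([a,a+\varepsilon])/\lambda$. This is a density in $t$ of size at least $\lambda e^{-\lambda R L}\,\lambda^{m-1}\ldots$.

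Next, freeze all randomness except $T_1$. On $A_m$, write
\[
I_\xi=F_m(t)=\int_0^{t}e^{-\eta_s}\,\dd s+\int_t^{R\ell_m}e^{-\xi_s}\,\dd s+e^{-\xi_{R\ell_m}}I'.
\]
Shifting $T_1$ while keeping the gaps $T_k-T_1$ fixed moves the whole group rigidly. Since the group has no cancellation and $\eta$ is frozen, the derivative in $t$ is the difference between $e^{-\eta_t}$ and the integrand after the group. The latter is at most $e^{1-am}$, which is small, so $F_m'(t)\ge e^{-1}-e^{1-am}\cdot(\text{const})\ge c>0$. This gives a one-sided Lipschitz lower bound, $F_m$ is increasing, and $|F_m'|\le C$.

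Now choose the constants. On $A_m$, $\xi_{R\ell_m}\in[am-1,\,m(a+\varepsilon)+1]$. Hence the future term is comparable to $e^{-am}$, but only up to the factor $e^{\varepsilon m}$. To avoid this drift, I would instead take all jump sizes within an $\varepsilon/m$-window. That costs a factor $(c/m)^m=e^{-O(m\log m)}$, which is still acceptable. Alternatively, take $a$ to be an atom or a point of support of $\Pi$ and absorb the resulting error into $L$. With $\xi_{R\ell_m}=am+O(1)$, we have $F_m(r_0\ell_m)\le\ell_m$ if $r_0$ is small and $L$ is large relative to $\kappa_1e^{1}$. Likewise $F_m(r_1\ell_m)\ge r_1\ell_m e^{-1}\ge\ell_{m-1}=e^a\ell_m$ if $r_1\ge e^{a+1}$, with $R>r_1+\delta$. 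So $F_m$ crosses $\mathcal I_m$.

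The pushforward of the $t$-subdensity $g_m$ under $F_m$ then has density at least $g_m(t)/\sup F_m'\ge g_m/C$ on $\mathcal I_m$. Integrating over the frozen variables gives $h_{I_\xi}\ge c\,\lambda^{m}\delta^{m-1}\ell_m^{m-1}p_m^m/(m-1)!$ on $\mathcal I_m$. The events $A_m$ for different $m$ are disjoint, since they specify different numbers of jumps before $R\ell_m$. We can therefore sum the submeasures, or simply define $h$ piecewise, and remain below the law. For $y\in\mathcal I_m$ we have $m\approx\log(1/y)/a$. The dominant factor is $\ell_m^{m}=e^{-am^2+O(m)}$, while $m!$ and $p_m^m$ contribute only $e^{O(m\log m)}$. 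This yields $h(y)\ge c_1\exp(-c_2(\log(1/y))^2)$.

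The main obstacle is controlling the pre-jump and inter-jump fluctuations of $\eta$ so that the derivative bound holds uniformly. Small-time control of $\sup|\eta|$ over a time $R\ell_0$ that does not shrink is fine. The other difficulty is keeping the accumulated jump-size error from shifting the scale by $e^{\varepsilon m}$; I would handle this with the shrinking jump window described above.
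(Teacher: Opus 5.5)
Your construction is the paper's: an early group of $m$ jumps of size $\ge a$, the first jump time as the smooth coordinate with the gaps frozen, a Markov split at $R\ell_m$, crossing of $\mathcal I_m$, a Lipschitz bound, pushforward of a constant density in $t$, and Stirling. Two steps, however, are wrong as written, and both are repaired by the paper's simpler choices.

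\textbf{Control of $\eta$.} You require $\sup_{s\le R\ell_0}|\eta_s|\le 1$ on a horizon that does not shrink with $m$, and you call this ``fine''. It is not fine in general. If $\eta$ is monotone, e.g.\ a pure drift $\eta_t=bt$ (take $\xi_t=bt+N_t$ with $N$ a Poisson process and $a=1$), the event is empty once $|b|R\ell_0>1$. Your constants force $R\ge r_1\ge e^{a+1}$ and $L$ large relative to $\kappa_1$. Raising the level from $1$ to $K$ does not help, because then $r_1>e^{a+K}$ and the horizon grows with $K$. The paper breaks this circularity as follows:
\begin{enumerate}
\item fix $K$ first;
\item choose $A_0$ with $\Prob(\sup_{s\le A_0}|\eta_s|\le K)>0$, which is always possible since $\eta$ is c\`adl\`ag at $0$;
\item choose $r_0,r_1,\delta,R,L$ in terms of $K$;
\item use only those $m$ with $R\ell_m\le A_0$.
\end{enumerate}
The shrinking horizon is exactly what makes small-time control available.

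\textbf{Jump sizes.} Condition~3 and the $\varepsilon/m$ window address a non-problem. The claimed cost $(c/m)^m$ is also unjustified. For a suitable non-atomic $\Pi$ with $a$ in its support, $\Pi([a,a+\varepsilon/m])$ can decay like $\exp(-e^{m/\varepsilon})$, and then $p_m^m$ swamps $e^{-O(m^2)}$. None of this is needed:
\begin{itemize}
\item The crossing at $r_0\ell_m$ only needs an \emph{upper} bound on the post-group contribution. Since every $J_k\ge a$, on the controlled event $\xi_s\ge am-K$ after the group, however large the jumps are.
\item The crossing at $r_1\ell_m$ uses only $\int_0^{r_1\ell_m}e^{-\eta_s}\,\dd s$.
\end{itemize}
So drop condition~3, and keep only the upper end of the window for $I'$, as the paper does with $I'\le M$.

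\textbf{Derivative of $F_m$.} Your description of $F_m'$ is correct only when $\eta$ is constant across the group. In general, with $T_k$ the shifted group times and $u$ smaller than the gaps,
\[
F_m(t+u)-F_m(t)=\sum_{k=1}^m e^{-(J_1+\cdots+J_{k-1})}\bigl(1-e^{-J_k}\bigr)\int_{T_k}^{T_k+u}e^{-\eta_s}\,\dd s .
\]
The coefficients telescope to $1-e^{-(J_1+\cdots+J_m)}\le 1$, so $0\le F_m(t+u)-F_m(t)\le e^K u$. That is all you need, and it is slightly sharper than the paper's bound $e^K m u$.
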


\begin{proof}
For lighter notation, we write $I$ instead of $I_\xi$ in the proof. Fix $K>0$. Since $\eta$ is a c\`adl\`ag L\'evy process starting from $0$, we can choose $A_0>0$ such that
\begin{equation}\label{eq:control-eta}
 p_0=
 \Prob\left(\sup_{0\leq s\leq A_0}|\eta_s|\leq K\right)>0.
\end{equation}
Choose $r_0,r_1,\delta>0$ such that
\begin{equation}\label{eq:constant-choice}
 e^K(r_0+\delta)<1/4,
 \qquad
 e^{-K}r_1>e^a,
 \qquad \text{and} \qquad
 R=r_1+\delta.
\end{equation}
This choice will be used only for the crossing estimate
\[
 F_m(r_0\ell_m)<\ell_m,
 \qquad
 F_m(r_1\ell_m)>\ell_{m-1},
\]
sketched in the right panel of Figure \ref{fig:group}. Next, choose $M>0$ such that $\Prob(I\leq M)>0$. For a positive constant $L$, which we choose shortly, and $m\geq1$, we introduce the natural scale
\begin{equation}\label{eq:scale}
 \ell_m=L e^{-am},
 \qquad
 \mathcal I_m=(\ell_m,\ell_{m-1}].
\end{equation}
At the deterministic cut time $R\ell_m$, the Markov property gives
\begin{equation}\label{eq:markov-split}
 I=
 \int_0^{R\ell_m}e^{-\xi_s}\,\dd s
 +e^{-\xi_{R\ell_m}}I',
\end{equation}
where $I'$ is an independent copy of $I$.
We choose $L$ so that, on $\{I'\leq M\}$, the second term in \eqref{eq:markov-split} is negligible on the scale $\ell_m$ after the $m$ selected jumps. Indeed, on the controlled event, after these jumps have occurred,
\begin{equation}
 e^{-\xi_{R\ell_m}}I'
 \leq e^K e^{-am}M
 =
 \frac{e^K M}{L}\ell_m,
 \quad
 \text{thus choose $L>0$ so that
 $e^K M\leq \frac{L}{8}$}.
\end{equation}
Fix $m$ large enough so that $R\ell_m\leq A_0$, so we can use \eqref{eq:control-eta}. We construct a submeasure of $\law(I)$ with density bounded from below on $\mathcal I_m$. Leave the first selected jump time as a continuous variable
\[
 t\in[r_0\ell_m,r_1\ell_m],
\]
and impose the following selected-jump pattern:
\begin{itemize}
 \item the first jump of $S$ occurs at time $t$;
 \item exactly $m-1$ further jumps of $S$ occur in $(t,t+\delta\ell_m]$;
 \item no other jump of $S$ occurs before $R\ell_m$;
 \item $\displaystyle \sup_{0\leq s\leq R\ell_m}|\eta_s|\leq K$;
 \item in the Markov decomposition at time $R\ell_m$, the independent copy $I'$ satisfies $I'\leq M$.
\end{itemize}
The fourth condition has probability at least $p_0$ by \eqref{eq:control-eta}, and the fifth has probability $\Prob(I\leq M)>0$.

Let $T$ be the first jump time of $S$, and let $N_S(B)$ denote the number of jumps of $S$ in a Borel set $B$. For every Borel set
$
 B\subseteq [r_0\ell_m,r_1\ell_m],
$
the selected-jump conditions give
\begin{equation}\label{eq:q-m}
\begin{aligned}
&\Prob\big(
 T\in B,\,
 N_S((T,T+\delta\ell_m])=m-1,\,
 N_S((T+\delta\ell_m,R\ell_m])=0
 \big)  \\
&\hspace{1.5cm}
 =
 \lambda e^{-\lambda R\ell_m}
 \frac{(\lambda\delta\ell_m)^{m-1}}{(m-1)!}
 \,\mathrm{Leb}(B).
\end{aligned}
\end{equation}
Indeed, conditionally on $T=t$, the increments of $S$ after time $t$ are independent of the past, and the conditional probability of the two count conditions equals
\[
 e^{-\lambda\delta\ell_m}
 \frac{(\lambda\delta\ell_m)^{m-1}}{(m-1)!}
 e^{-\lambda(R\ell_m-t-\delta\ell_m)}.
\]
Multiplying by the density $\lambda e^{-\lambda t}$ of $T$ gives \eqref{eq:q-m}.

Freeze the path of $\eta$ on $[0,R\ell_m]$, the selected jump sizes, the relative positions of the $m-1$ jumps in $(t,t+\delta\ell_m]$, and the value of $I'$, under the two control conditions above. Let $F_m(t)$ be the value of $I$ obtained from this frozen configuration. We now check the crossing property, see the right panel of Figure~\ref{fig:group}, namely
\[
 F_m(r_0\ell_m)<\ell_m,
 \qquad \text{and}\qquad
 F_m(r_1\ell_m)>\ell_{m-1}.
\]
At $t=r_0\ell_m$, the contribution before and during the group is at most $e^K(r_0+\delta)\ell_m$. After the group, all $m$ selected jumps have occurred, so the remaining contribution, including the second term in \eqref{eq:markov-split}, is at most
\[
 e^Ke^{-am}(R\ell_m+M).
\]
By \eqref{eq:constant-choice} and taking $m$ large enough,
\[
 F_m(r_0\ell_m)<\ell_m.
\]
At the other endpoint, before the first selected jump the selected compound Poisson part has not moved. Hence
\[
 F_m(r_1\ell_m)
 \geq
 \int_0^{r_1\ell_m}e^{-\eta_s}\,\dd s
 \geq
 e^{-K}r_1\ell_m
 >
 e^a\ell_m
 =
 \ell_{m-1}.
\]
Thus, once continuity is established, the image of $[r_0\ell_m,r_1\ell_m]$ under $F_m$ contains $\mathcal I_m$.

The map $F_m$ is increasing, since delaying the selected group can only increase $e^{-\xi_s}$ pointwise. Moreover, if the first selected jump time is shifted from $t$ to $t+u$, then each of the $m$ selected jump times is shifted by $u$. Thus the two integrands can differ only on at most $m$ time intervals of length $u$. On the controlled event, $e^{-\xi_s}\leq e^K$, and the terminal term in \eqref{eq:markov-split} is unchanged, since all selected jumps have occurred before $R\ell_m$ in both configurations. Therefore
\begin{equation}\label{eq:lipschitz}
 0\leq F_m(t+u)-F_m(t)\leq e^Kmu,
\end{equation}
whenever $t,t+u\in[r_0\ell_m,r_1\ell_m]$.
In particular $F_m$ is continuous. Since its image contains $\mathcal I_m$, \eqref{eq:lipschitz} implies that, for every Borel set $A\subset\mathcal I_m$,
\begin{equation}\label{eq:preimage}
 \mathrm{Leb}\{t\in[r_0\ell_m,r_1\ell_m]:F_m(t)\in A\}
 \geq
 e^{-K}m^{-1}\mathrm{Leb}(A).
\end{equation}
Now apply \eqref{eq:q-m} with
\[
 B=\{t\in[r_0\ell_m,r_1\ell_m]:F_m(t)\in A\}.
\]
Let $A\subset \mathcal I_m$ be Borel. The estimates above are uniform in the frozen variables satisfying the two control conditions. Hence the contribution to $\Prob(I\in A)$ coming from the selected-jump construction is bounded below by
\[
 p_0\Prob(I\leq M)\,
 \lambda e^{-\lambda R\ell_m}
 \frac{(\lambda\delta\ell_m)^{m-1}}{(m-1)!}
 e^{-K}m^{-1}\mathrm{Leb}(A).
\]
Here $p_0$ and $\Prob(I\leq M)$ come from the two control conditions, \eqref{eq:q-m} gives the selected-jump contribution, and \eqref{eq:preimage} gives the lower bound on the set of first-jump times leading to $I\in A$. The jump sizes and the relative positions of the $m-1$ later jumps are then integrated over their conditional laws.

Since $\ell_m\to0$, the factor $e^{-\lambda R\ell_m}$ is bounded from below for all large $m$. Absorbing all fixed positive constants into $c>0$, we get
\[
 \Prob(I\in A)
 \geq
 c\,m^{-1}\frac{(c\ell_m)^{m-1}}{(m-1)!}\mathrm{Leb}(A),
 \qquad
 \text{for all Borel }A\subset\mathcal I_m.
\]
Equivalently, on $\mathcal I_m$ the law of $I$ dominates an absolutely continuous measure with density $h_m$ satisfying
\begin{equation}\label{eq:hm-lower}
 h_m(y)\geq c\,m^{-1}\frac{(c\ell_m)^{m-1}}{(m-1)!},
 \qquad
 \text{for }y\in\mathcal I_m,
\end{equation}
where $c>0$ is independent of $m$ and $y$.
Consider then
\[
 h_I(y)=\sum_{m\geq m_0}\1_{\mathcal I_m}(y)h_m(y),
\]
where $m_0$ is large enough for the preceding estimates to hold. Since the intervals $\mathcal I_m$ are disjoint and each $h_m(y)\,\dd y$ is dominated by $\law(I)$ on $\mathcal I_m$, the measure $h_I(y)\,\dd y$ is also dominated by $\law(I)$.

It remains to estimate \eqref{eq:hm-lower}. By Stirling's formula and \eqref{eq:scale},
\[
 -\log\left(
 m^{-1}\frac{(c\ell_m)^{m-1}}{(m-1)!}
 \right)
 \leq C_1m^2,
 \qquad \text{for large $m$.}
\]
Moreover, for $y\in\mathcal I_m=(\ell_m,\ell_{m-1}]$ with $\ell_m=Le^{-am}$, we have that
\[
 a(m-1)-\log L
 \leq
 \log(1/y)
 <
 am-\log L.
\]
Thus $m$ is comparable with $\log(1/y)$ uniformly for $y\in\mathcal I_m$. Consequently, there exist constants $c_1,c_2>0$ such that \[ h_I(y)\geq c_1\exp\left(-c_2(\log(1/y))^2\right), \qquad \text{for all sufficiently small }y>0. \]
\end{proof}
\section{Proof of the main result}

Theorem~\ref{thm:main} is now a straightforward consequence of the previous propositions.

\begin{proof}[Proof of Theorem~\ref{thm:main}]
Assume first that $\Pi((0,\infty))>0$, and let $h_{I_\xi}$ be the subdensity from Proposition~\ref{prop:group}. The change of variables $x=1/y$ gives an absolutely continuous submeasure of the law of $X_\xi=1/I_\xi$ with density
\[
 h_{X_\xi}(x)=x^{-2}h_{I_\xi}(1/x).
\]
From \eqref{eq:hI-lower},
\[
 h_{X_\xi}(x)\geq c_1x^{-2}\exp\left(-c_2(\log x)^2\right),
 \qquad
 \text{for large }x.
\]
Consequently, for some $C_1,C_2,C_3\geq0$,
\[
 -\log h_{X_\xi}(x^2)
 \leq
 C_1+C_2\log x+C_3(\log x)^2,
 \qquad
 \text{for large }x.
\]
It follows that, for a suitable $c>0$,
\[
 \int_c^\infty \frac{-\log h_{X_\xi}(x^2)}{1+x^2}\,\dd x<\infty.
\]
Therefore Proposition~\ref{prop:subkrein} implies that $X_\xi$ is moment-indeterminate.

For completeness, we recall the argument for the reverse direction, as in \cite{Bertoin-Yor-2002}. If $\Pi((0,\infty))=0$, then $\psi(q)=O(q^2)$ as $q\to\infty$ by the L\'evy--Khintchine formula. Hence \eqref{eq:BY} gives
\[
 \E(X_\xi^k)=\E(I_\xi^{-k})\leq C^k k!,
 \qquad
 \text{for all }k\geq1.
\]
Thus $\E(e^{\theta X_\xi})<\infty$ for some $\theta>0$, and therefore $X_\xi$ is moment determinate.
\end{proof}
\section*{Acknowledgments}

The author thanks Professor Jordan  Stoyanov for the invitation to the session
``Probability Distributions via Moments and Cumulants'' at the 2026 IMS Annual Meeting in Salzburg. This invitation encouraged the work on this project. The author was supported by the SNSF SCIEX programme, grant IZSF-0-235789.

\bibliographystyle{alpha}
\bibliography{Bibliography}

\end{document}